\numberwithin{equation}{section}
\def\R{\mathbb R}
\def\Z{\mathbb Z}
\def\C{\mathbb C}
\def\P{\mathbb P}
\def\N{\mathbb N}
\def\E{\mathbb E}
\def\ee{\varepsilon}
\def\gcd{\operatorname{gcd}}
\def\deg{\text{deg}}
\newtheorem*{theorem*}{Theorem}
\newtheorem{theorem}{Theorem}[section]
\newtheorem{lemma}[theorem]{Lemma}
\theoremstyle{remark}
\newtheorem{remark}[theorem]{Remark}
\theoremstyle{definition}
\newtheorem{definition}[theorem]{Definition}
\theoremstyle{remark}
\numberwithin{equation}{section}
\begin{document}

\begin{frontmatter}[classification=text]

\title{Irreducibility of Random Polynomials of Bounded Degree} 

\author[huypham]{Huy Tuan Pham\thanks{Supported by the Craig Franklin Fellowship in Mathematics}}
\author[maxxu]{Max Wenqiang Xu\thanks{Supported by the Cuthbert C. Hurd Graduate Fellowship in Mathematics}}

\begin{abstract}
It is known that random monic integral polynomials of bounded degree $d$ and integral coefficients distributed uniformly and independently in $[-H,H]$ are irreducible over $\Z$ with probability tending to $1$ as $H\to \infty$. In this paper, we give a general criterion for guaranteeing the same conclusion under much more general coefficient distributions, allowing them to be nonuniformly and dependently distributed over arbitrary sets. 
\end{abstract}
\end{frontmatter}

\section{Introduction}

A lot of research has been  done in studying the irreducibility and other arithmetic properties of random integral polynomials. In general, there are two different models often considered, namely, the bounded degree model, e.g., see \cite{van}, \cite{chela}, \cite{Sam}, \cite{kozmanagain}, \cite{igor}, \cite{man}, \cite{chern}; and the bounded height model, e.g., see \cite{BV}, \cite{bary}, \cite{Phil}, \cite{kon}, \cite{PV2}. An excellent survey of this active research area can be found in \cite{wood}. In the bounded height model, the polynomial $f(x)=x^d+a_{d-1}x^{d-1}+\dots+a_0$ has growing degree $d\to \infty$ while the coefficients $a_i$'s are chosen independently and uniformly at random from a set of fixed size. In the bounded degree model, the polynomial has fixed degree $d$ while the integral coefficients $a_i$'s are chosen independently and uniformly at random from $[-H,H]$ with $H\to \infty$. Most of the previous results in the bounded degree model applied only to this specific distribution of the coefficients. In this paper we focus on studying irreducibility over $\Z$ of random polynomials  in a wide class of bounded degree models. 

\subsection{Background} Van der Waerden  \cite{van} proved that the probability of  irreducibility over $\Z$ of a random polynomial where all integral coefficients are distributed independently and uniformly at random in $[-H,H]$ tends to 1. Chela \cite{chela} found the tight bound $(1+o(1))C_d/H$ (where $C_d$ is a constant only depending on degree $d$) for the probability that such a polynomial is reducible.

It would be interesting to know if this phenomenon of irreducibility over $\Z$ of random polynomials is more general, as the previous proofs depend heavily on the fact that the integral coefficients are drawn uniformly and independently at random from the interval $[-H,H]$. However, it does not seem to be the case that irreducibility over $\Z$ depends heavily on the exact distributions of the coefficients. In fact, it is intuitive that for a generic distribution of random integral polynomials, the random polynomial is irreducible with high probability. Some numerical evidence that supports this can be found in \cite{wood}. Indeed, in this paper, we show that under much more general distributions of the coefficients, the polynomial is reducible with small probability. 

The methods used in \cite{van} and \cite{chela} heavily used the multiplicative property of the Mahler measure (see \cite{BG} for details), which leads to the multiplicative property of the height of polynomials, where the height of an integral polynomial is the largest absolute value of the coefficients. When the integral coefficients are uniformly and independently distributed in an interval,  this multiplicative property of the height allows us to efficiently constrain the possible factors of the polynomial (if there exists one), from which we can count the number of polynomials that admit a nontrivial factorization over $\Z$. However, when the integral coefficients are not uniformly distributed in an interval, for example, when they are distributed on a sparse subset of a big interval, then the height is much larger than the size of the support of the coefficients, and the above argument does not give good bounds. In this paper, we will introduce new methods to cover such cases, even allowing for the support of the coefficients to be arbitrary sets.

In \cite{igor}, Rivin introduced a method to show that random polynomials are irreducible over $\Z$ with high probability by exploiting the cases where the constant coefficient has few divisors. This method can be used to quickly obtain van der Waerden's result. If the constant integral coefficient is distributed uniformly in $[-H,H]$, then it is not hard to show it has very few divisors on average. This can be used to prove an upper bound $O(\log H/H)$ on the probability that the polynomial is reducible. A version of this method was utilized by Bary-Soboker and Kozma  \cite{kozmanagain} and Chern \cite{chern} to prove irreducibility of random integral polynomials generalizing the uniform distribution over $[-H,H]$ to other distributions of random polynomials which satisfy certain conditions. In particular, both papers consider the case where the coefficients are chosen independently and uniformly from a set $S$ of integers such that there is some small prime $p$ (not much larger than $|S|$) for which the elements of $S$ are distinct modulo $p$. They then combine the factorization of the constant coefficient over $\mathbb{Z}$ together with the factorization of the polynomial over $\mathbb{Z}/p\Z$ to deduce the bounds on the probability of reducibility. 

It is crucial in \cite{kozmanagain} and \cite{chern} that the coefficients are distributed independently and further satisfied strong congruence relations modulo a small prime. Thus, these results do not apply to the case of coefficients distributed over sets that are not well-behaved modulo $p$, such as the set of polynomial values of an integral polynomial, or to the case of dependent coefficients. 

In another direction, one can ask if similar results hold when only a subset of the coefficients is randomized. Cohen \cite{Cohen2variable, Cohen2variablewrong} (together with \cite{CohenGalois}) showed that the probability of irreducibility tends to one when all but two coefficients are fixed and appropriate conditions hold. In fact, Cohen's results apply to the Galois group of the polynomial. However, they come with weaker probability bounds that are suboptimal under our settings, and require the coefficients to be uniformly and independently distributed in a box.

Our main contribution in this paper is to address the above restrictions of the previous methods. In particular, we give nearly optimal bounds for the probability of reducibility even when we allow for the coefficients to have nontrivial dependency and non-uniform distribution over sparse sets. Our results also hold even if we only randomize one appropriate coefficient of the polynomial.
By working directly in a field of charateristic $0$ instead of mapping to a finite field, we completely avoid the congruence restrictions on the distribution of the coefficients.
Our technique is also based on the surprising observations of \cite{igor}. However, we crucially make explicit and quantitative some objects in the argument. Via this explicit construction, we verify the key assumption for the validity of the argument by using complex analysis. 

\subsection{Main Result}

We consider random polynomials with random coefficients where the degree $d$ of the polynomial is fixed and the distribution of the coefficients has support growing to infinity. We show that, under mild assumptions on the joint distribution of the coefficients, the probability that the random polynomial is irreducible over $\Z$ tends to 1 as the support of the coefficient distribution tends to infinity. In particular, we show that this holds even when the coefficients are not independently or uniformly distributed. Moreover, we allow the coefficients to be supported on arbitrary subsets of integers with size tending to infinity instead of the interval $[-H,H]$. In fact, our result applies even when only one appropriate coefficient is random, while the remaining coefficients are fixed. These give significant generalizations of previous results in the literature. We remark that we do not need the degree $d$ of the polynomial to be fixed, but our bounds are meaningful only when $d$ is quite small compared to the size of the support of the coefficient distributions. 

Our method considers separately the distribution of the constant coefficient and the distributions of the higher-degree coefficients. For the constant coefficient, we only need to require its expected number of divisors to be small. For higher-degree coefficients, we allow $a_i$'s to be a polynomial change of variable from a product distribution that is not too far from the uniform distribution, satisfying a certain non-degeneracy condition. This relaxes both the uniformity and the independence of the coefficients. In all cases, we allow the support of the coefficients $a_i$ to be completely arbitrary, significantly generalizing the bounded box model. For appropriate instances, we also show that it suffices to have one random coefficient $a_i$ for $i$ coprime to the degree $d$ (or a subset of coefficients $a_i,i\in I$ for $\gcd(\{d\}\cup I)=1$) while the rest of the coefficients are fixed or arbitrarily distributed. 

Our most general result, Theorem~\ref{strong theorem}, gives a bound for the probability that the random polynomial is reducible under a very general class of coefficient distributions. Theorem~\ref{strong theorem} involves checking a mild non-degeneracy condition that is not straightforward to verify. Theorem~\ref{markov} gives a special case of Theorem~\ref{strong theorem} where we can verify the condition, while still applying to general coefficient distributions. In the following two theorems, Theorem~\ref{thm:simple cases} and Theorem~\ref{thm:vary-one-coeff}, we describe some straightforward specific applications of Theorem~\ref{markov} and Theorem~\ref{strong theorem} in simplified settings, where already previous results in the literature do not apply. We expect that the general result, Theorem~\ref{strong theorem}, could be applicable to many more interesting situations. 

\begin{theorem}\label{thm:simple cases}
Let $f(x)=x^d+a_{d-1}x^{d-1}+\dots+a_1x+a_0\in \mathbb{Z}[x]$, where $a_0,\dots,a_{d-1}$ are distributed randomly according to one of the following distributions. 
\begin{enumerate}
    \item Sparse coefficient distribution: $a_0$ is uniformly distributed over the set of polynomial values $P(n)$ for $n\in [-H,H]$ and $P$ not the zero polynomial, the remaining $a_j$ for $j \ne 0$ are uniformly distributed over an arbitrary set of size $H$, and $a_0,a_1,\dots,a_{d-1}$ are independent. Then there exists a constant $C(P)$ only depending on $P(x)$ such that $$\mathbb{P}(f(x)\textrm{ is irreducible over $\mathbb{Z}$}) \ge 1 - \frac{\deg(P)}{2H+1} - \frac{2d2^d (\log {H})^{C(P)}}{H}.$$
    \item Nonuniform coefficient distribution: $a_j$ is distributed according to the binomial distribution $B(H,p)$ for $\min(Hp,H(1-p)) \to \infty$, and $a_0, a_1, \dots, a_{d-1}$ are independent. Then 
    $$\mathbb{P}(f(x)\textrm{ is irreducible over $\mathbb{Z}$}) \ge 1 - o(1).$$
    \item Dependent coefficient distribution: $a_0$ is uniformly distributed over the set of polynomial values $P(n)$ for $n\in [-H,H]$ and $P$ not the zero polynomial, and for $i\in \{1,2,3,\cdots,d-1\}$, $a_{i}=P_i(a_{i-1},\dots,a_{0},t_{i})$ where $P_i$ are polynomials of degree at most $L$ and $P_i$ has positive degree in $t_i$ for any fixed nonzero value of $a_0$, and $t_1,\dots,t_{d-1}$ are independently and uniformly chosen from an arbitrary set of size $H$. Then there exists a constant $C(P)$ only depending on $P(x)$ such that $$\mathbb{P}(f(x)\textrm{ is irreducible over $\mathbb{Z}$}) \ge 1 - \frac{\deg(P)}{2H+1} - \frac{2 d^2(2L)^d (\log {H})^{C(P)}}{H}.$$
\end{enumerate}
\end{theorem}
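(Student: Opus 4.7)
The plan is to deduce all three parts from the general results Theorem~\ref{markov} and Theorem~\ref{strong theorem}, so the work is in verifying their hypotheses. The common skeleton, going back to Rivin's observation, is: if $f=gh$ is a nontrivial factorization over $\Z$, then $a_0=g(0)h(0)$, so $g(0)$ is one of the divisors of the constant term. One therefore needs to bound (i) $\mathbb{P}(a_0=0)$, (ii) the expected number of divisors $\mathbb{E}\, d(a_0)$, and (iii) for each candidate divisor, the probability that the remaining random coefficients cooperate to produce an integer factor with that constant term. Step~(iii) is exactly what Theorem~\ref{markov} and Theorem~\ref{strong theorem} provide, provided the distribution of $(a_1,\ldots,a_{d-1})$ can be expressed as a polynomial change of variable from a sufficiently ``non-degenerate'' product distribution on an arbitrary ground set.

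For part~(1), the error term $\deg(P)/(2H+1)$ is simply the probability that $P(n)=0$, bounded by the number of integer roots of $P$ inside $[-H,H]$. Conditioning on $a_0\ne 0$, I would invoke the classical divisor-sum estimate $\sum_{|n|\le H}d(|P(n)|)\ll_{P} H(\log H)^{C(P)}$ valid for any nonzero $P\in\Z[x]$, which yields $\mathbb{E}\, d(a_0)=O((\log H)^{C(P)})$. Since the higher-degree coefficients are uniform on arbitrary size-$H$ sets and independent, they already form a product distribution and the non-degeneracy hypothesis of Theorem~\ref{markov} holds trivially. The factor $2d\cdot 2^d$ in the final bound then comes from a union bound over the $d$ possible values of $\deg g$ and the $O(2^d)$ candidate assignments of $\{g(0),h(0)\}$ among divisors of $a_0$.

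For part~(2), Theorem~\ref{markov} again applies. The distributional work is to bound $\mathbb{E}\, d(a_0)$ for $a_0\sim B(H,p)$: by Chernoff concentration around $Hp$ with deviation $\sqrt{Hp(1-p)}$, this reduces to an average of $d(n)$ over an interval, which is $O(\log H)$. The probability $\mathbb{P}(a_0=0)=(1-p)^H$ is $o(1)$ under the hypothesis $\min(Hp,H(1-p))\to\infty$. The product-distribution structure is immediate since the $a_j$ are independent, and a binomial sample is easily realized as a polynomial change of variable from a uniform product distribution (indeed, a sum of independent Bernoullis), so the hypotheses of Theorem~\ref{markov} are met.

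Part~(3) is the most delicate. The constant term is handled exactly as in part~(1), producing the $\deg(P)/(2H+1)$ and $(\log H)^{C(P)}$ factors. The recursive definition $a_i=P_i(a_{i-1},\ldots,a_0,t_i)$ with $t_i$ uniform on an arbitrary size-$H$ set and $P_i$ of total degree at most $L$ (with positive degree in $t_i$) is precisely the polynomial change of variable from a product distribution that Theorem~\ref{strong theorem} is designed to handle. The factor $(2L)^d$ reflects that each factorization constraint becomes, after substitution, a polynomial equation of degree $O(L)$ in each $t_i$ whose integer solution set has size bounded by that degree, while the $d^2$ absorbs a union bound over the degree of the candidate factor together with the indexing of the coefficients. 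I expect the main obstacle to be the verification of the non-degeneracy condition of Theorem~\ref{strong theorem} in this generality; the authors themselves flag it as ``not straightforward to verify,'' and it is exactly the hypothesis that $P_i$ has positive degree in $t_i$ (for any nonzero value of $a_0$) that prevents the change of variables from collapsing and is presumably exploited via the complex-analytic argument alluded to in the introduction.
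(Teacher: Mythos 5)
Your overall plan — reduce each part to Theorem~\ref{markov} — is the route the paper takes, and your treatment of part~(1) is essentially the paper's: take $t_i=a_i$, $F$ the identity ($L=1$, $C=1$), bound $\mathbb{P}(a_0=0)$ by $\deg(P)/(2H+1)$, and bound $\mathbb{E}[\tau(a_0)]$ by van der Corput's estimate.

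Part~(2), however, has a genuine gap. You propose to put $B(H,p)$ into the framework of Theorem~\ref{markov} by writing a binomial as a sum of $H$ independent Bernoulli variables and treating those Bernoullis as the $t_i$. This fails the hypotheses of Theorem~\ref{markov} in two ways. First, Theorem~\ref{markov} requires each $t_i$ to be supported on a set $H_i$ of size at least $H$, while each Bernoulli is supported on only two points. Second, even invoking the more general Theorem~\ref{strong theorem}, the resulting error term $2Cd2^dL\,\tau(D_0)\sum_i 1/k_i$ would have $m\approx dH$ summands with $k_i=2$, so the bound is of size $\Theta(H)$ and is vacuous. What the paper actually does — and what is needed — is to keep $m=d-1$ with $t_i=a_i$ and verify the $(C,2^dL)$-uniformity of the binomial itself: the maximum point probability of $B(H,p)$ is $(1+o(1))/\sqrt{2\pi Hp(1-p)}$, so by the remark after Definition~1.5 the binomial is $\bigl((1+o_{pH\to\infty}(1))\sqrt{H/(2\pi p(1-p))},\,H\bigr)$-uniform, which is the observation the paper plugs in. Your Chernoff-plus-divisor-average argument for $\mathbb{E}[\tau(a_0)]$ is a reasonable way to control the remaining expectation (the paper is terse on this point), but the uniformity step cannot be bypassed the way you suggest.

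For part~(3) the structure is right (it is again an application of Theorem~\ref{markov}, with $h=d-1$ coprime to $d$), but your accounting for the factor $(2L)^d$ is not. It does not come from ``degree $O(L)$ in each $t_i$''; it comes from compounding: $a_i=P_i(a_{i-1},\dots,a_0,t_i)$ with $\deg P_i\le L$ means the composite map $F=(F_1,F_2)$ from $(a_0,t_1,\dots,t_{d-1})$ to $(a_1,\dots,a_{d-1})$ has total degree at most $L^{d-1}\le L^d$, and this multiplies the degree bound $2^d$ for $P_{d,k,a_0,b_0}$ to give $(2L)^d$. The non-degeneracy condition is then supplied by the hypothesis that each $P_i$ has positive degree in $t_i$ for any fixed nonzero $a_0$, which makes the hypothesis of Theorem~\ref{markov} (and hence the verification via Theorem~\ref{check-not-zero}) hold — you correctly identify this as the crux, but it should be pointed out that it is already built into the statement of Theorem~\ref{markov}, so no new complex-analytic work is needed at this stage.
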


In Case (1) of Theorem~\ref{thm:simple cases}, if we choose $P(x)=x$, and $a_j$ distributed uniformly over $[-H,H]$, then we recover the classical model. Already in this case, we observe that the probability bound we provide is optimal up to polylogarithmic factors in $H$. By relaxing congruence conditions in \cite{kozmanagain, chern}, we can allow for arbitrary distribution of $a_j$, for example, over polynomial values of any given integral polynomial. In \cite{wood}, a question is raised about irreducibility of polynomials whose coefficients are distributed according to the Binomial distribution. This motivates us to consider non-uniform coefficient distributions, of which Case (2) of Theorem~\ref{thm:simple cases} is an example. Case (3) of Theorem~\ref{thm:simple cases} is an example where the coefficients are not independently distributed. In general, the most general version of our result, Theorem~\ref{strong theorem}, allows for the coefficients $(a_0,\dots,a_{d-1})$ to be any polynomial function of a set of independent random variables $(t_0,\dots,t_m)$. 

In many situations we can guarantee the conclusion of Theorem~\ref{thm:simple cases} under much weaker assumptions, where only one appropriate coefficient or an appropriate subset of coefficients of the polynomial is varied while the remaining coefficients are fixed or distributed according to an arbitrary distribution (similar to the setting of \cite{Cohen2variable, Cohen2variablewrong}). We denote by $\tau(n)$ the number of divisors of $n$. 

\begin{theorem}\label{thm:vary-one-coeff}
Let $f(x)=x^d+a_{d-1}x^{d-1}+\dots+a_1x+a_0\in \mathbb{Z}[x]$, where $a_0,\dots,a_{d-1}$ are distributed randomly according to the following distribution. For some $h\ge 1$ coprime to $d$, $a_h$ is uniformly distributed in an arbitrary set of size at least $H$, the remaining $a_j$ for $j\ne h$ have an arbitrary distribution, and $a_0,a_1,\dots,a_{d-1}$ are independent. 

Then $$\mathbb{P}(f(x)\textrm{ is irreducible over $\mathbb{Z}$}) \ge 1 - \mathbb{P}(a_0=0) - \frac{2d2^d \mathbb{E}[\tau(a_0)]}{H}.$$

Furthermore, the same conclusion holds if for a subset $I$ of $[d-1]$ with $\gcd(\{d\}\cup I)=1$, $a_i$ is uniformly distributed in an arbitrary set of size at least $H$ for $i\in I$, the remaining $a_j$ for $j\notin I$ have an arbitrary distribution, and $a_0,a_1,\dots,a_{d-1}$ are independent. 
\end{theorem}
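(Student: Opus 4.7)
The plan is to apply Rivin's divisor method, refined into a B\'ezout-type quantitative count, and to view Theorem~\ref{thm:vary-one-coeff} as a specialization of Theorem~\ref{strong theorem}. First I would decompose
$$\mathbb{P}(f\text{ reducible}) \le \mathbb{P}(a_0 = 0) + \mathbb{P}(f\text{ reducible and } a_0 \neq 0),$$
reducing the task to showing that, conditional on any realization of the $a_j$ for $j\neq h$ with $a_0\neq 0$, the number of integers $a_h$ for which $f(x) = x^d + \sum_{j\neq h} a_j x^j + a_h x^h$ factors over $\mathbb{Z}$ is at most $2d\cdot 2^d\tau(|a_0|)$. Averaging the resulting conditional probability $\le 2d\cdot 2^d\tau(|a_0|)/H$ over the remaining coefficients then yields the stated bound.

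To bound the number of bad $a_h$, I would consider all monic factorizations $f = g h_2$ over $\mathbb{Z}$ with $\deg g = k\in\{1,\ldots,d-1\}$ and parametrize them by the triple $(k, b_0, c_0) := (\deg g, g(0), h_2(0))$. The constraint $b_0 c_0 = a_0$ forces $(b_0, c_0)$ to lie among at most $2\tau(|a_0|)$ ordered pairs of integer divisors of $a_0$. For each such triple, the requirement that the coefficients of $g h_2$ at positions $j\in\{1,\ldots,d-1\}\setminus\{h\}$ match the fixed values produces $d-2$ polynomial equations of total degree at most $2$ in the $d-2$ remaining unknowns $(b_1,\ldots,b_{k-1}, c_1,\ldots,c_{d-k-1})$. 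B\'ezout's theorem caps the number of complex solutions at $2^{d-2}$, and each solution determines a single value of $a_h$ as the $x^h$-coefficient of $g h_2$. Summing over the at most $d-1$ values of $k$ and the $2\tau(|a_0|)$ divisor pairs yields the required bound.

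The main obstacle is verifying that this polynomial system is genuinely zero-dimensional for every fixed configuration of the non-random coefficients, which is the non-degeneracy input that lets us invoke Theorem~\ref{strong theorem} and the precise point where the hypothesis $\gcd(h,d)=1$ becomes essential. A useful starting observation is that the bivariate polynomial $F(x,T):=p(x)+Tx^h$, where $p(x)=x^d+\sum_{j\neq h}a_j x^j$, is irreducible in $\mathbb{C}[x,T]$ whenever $p(0)=a_0\neq 0$: any factor independent of $T$ must divide both $p(x)$ and $x^h$, hence lie in $\gcd(p(x),x^h)=1$. Upgrading this algebraic irreducibility to the absence of positive-dimensional families of integer factorizations genuinely requires $\gcd(h,d)=1$, as the failure otherwise is witnessed by the one-parameter family $(x^2+tx+1)(x^2+(a_3-t)x+1)=x^4+a_3x^3+(2+a_3t-t^2)x^2+a_3x+1$ in the case $d=4,\,h=2$, where infinitely many integers $a_2$ yield reducible $f$ because the $x^1$ and $x^3$ coefficients both equal $a_3$ independently of $t$. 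I expect the non-degeneracy analysis feeding into Theorem~\ref{strong theorem} to rule out such families precisely under $\gcd(h,d)=1$.

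For the subset version with $\gcd(\{d\}\cup I)=1$, I would iteratively reduce to the single-coefficient case by conditioning on all but one $a_i$ with $i\in I$, choosing each coefficient so that the gcd hypothesis is preserved; alternatively, one can apply Theorem~\ref{strong theorem} directly to the joint distribution of $(a_i)_{i\in I}$ after verifying the multivariate analogue of the non-degeneracy condition under $\gcd(\{d\}\cup I)=1$.
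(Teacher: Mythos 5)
Your decomposition is the right one, and your B\'ezout count correctly predicts the shape of the bound, but the proposal stops precisely at the hard part. You acknowledge that the ``main obstacle is verifying that this polynomial system is genuinely zero-dimensional'' and that you ``expect'' the non-degeneracy analysis to rule out positive-dimensional families under $\gcd(h,d)=1$ --- but that expectation is the theorem. Without it, B\'ezout gives you nothing: a one-parameter family of complex factorizations could sweep out infinitely many values of $a_h$, and then the proof collapses. Your observation that $p(x)+Tx^h$ is irreducible in $\mathbb{C}[x,T]$ when $a_0\ne 0$ is a reasonable first step but, as you say, does not upgrade to the statement you need, and you offer no argument for the upgrade. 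In the paper, this is exactly the content of Theorem~\ref{check-not-zero}, proved by complex analysis: using the argument principle one shows that, with the other $a_j$ fixed and $|a_h|\to\infty$, $f$ has $h$ roots of magnitude $(1+o(1))|a_0/a_h|^{1/h}$ and $d-h$ roots of magnitude $(1+o(1))|a_h|^{1/(d-h)}$; any product of $k_1$ small and $k_2$ large roots therefore has magnitude $(1+o(1))|a_0|^{k_1/h}|a_h|^{k_2/(d-h)-k_1/h}$, and for this to stay equal to the fixed nonzero $b_0$ one needs $k_1/h=k_2/(d-h)$, which by $\gcd(h,d)=1$ (hence $\gcd(h,d-h)=1$) forces $k_1\in\{0,h\}$ and $k_2\in\{0,d-h\}$ with $k=k_1+k_2\in\{0,d\}$, a contradiction for $1\le k\le d-1$. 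This is the quantitative replacement for your conjectured zero-dimensionality, and it is the step your proposal must actually supply.

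There is a second, smaller gap in your treatment of the subset version: the suggestion to ``iteratively reduce to the single-coefficient case by conditioning on all but one $a_i$ with $i\in I$, choosing each coefficient so that the gcd hypothesis is preserved'' does not work, because there may be no single $i\in I$ with $\gcd(i,d)=1$ even though $\gcd(\{d\}\cup I)=1$ (take $d=6$, $I=\{2,3\}$). You would have to go with your alternative, i.e.\ verify the multivariate non-degeneracy directly. The paper does this by a genuine multi-scale version of the argument above: order $I=\{i_1>\dots>i_s\}$, send $|a_{i_1}|\to\infty$ and then each ratio $a_{i_t}/a_{i_{t-1}}\to\infty$ at suitably separated rates, show that the roots of $f$ cluster into groups of sizes $d-i_1$, $i_{t-1}-i_t$, and $i_s$ with distinct orders of magnitude, and use $\gcd(\{d\}\cup I)=1$ to rule out any subset of $k<d$ roots having a bounded nonzero product. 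Finally, a remark on route: where you invoke B\'ezout on the bilinear coefficient-matching system, the paper instead packages the factorization condition into the single symmetric polynomial $P_{d,k,a_0,b_0}$ of degree $<2^d$ and applies Schwartz--Zippel; this is a bit cleaner because it bounds the number of bad values of $a_h$ directly (as roots of a nonzero univariate polynomial) rather than bounding the number of complex factorizations, sidestepping the question of whether distinct factorizations give distinct $a_h$ or whether components of the solution variety at infinity matter.
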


Theorem~\ref{thm:vary-one-coeff} shows that $f$ is irreducible with high probability even when we fix arbitrarily all coefficients apart from one appropriate coefficient, or an appropriate subset of coefficients.

We next introduce several useful definitions before stating a more general result, Theorem~\ref{markov}, from which Theorems \ref{thm:simple cases} and \ref{thm:vary-one-coeff} can be deduced.

\begin{definition} We call a positive integer $n$ is {\it $s$-simple} if $\tau(n) \le s$.
\end{definition}

In particular, any integer with bounded number of prime factors (counted with repeat, that is, $\Omega(n)$), is $s$-simple for $s$ being a constant. Even for $s = 2$, infinitely many integers (primes) are $s$-simple. All sufficiently large integers $n$ are $n^{\ee}$-simple and $n$ is $s$-simple if $\Omega(n)\le \log s$.

We also quantify the notion of being close to the uniform distribution. 

\begin{definition}
Let $D$ be a distribution supported on a finite subset of $\mathbb{C}$ of size $k$. We say that $D$ is {\it $(C,t)$-uniform} if for any subset $T$ of $\mathbb{C}$ of size at most $t$, $D(T) \le \frac{C|T|}{k}$. 
\end{definition}

The uniform distribution over $k$ points is $(1,k)$-uniform. In general, it is easy to see that any distribution on $k$ points whose largest probability of a point is $p$ is $(pk,k)$-uniform.

The next theorem gives a general class of coefficient distribution where we can give a quantitative bound on the probability of irreducibility. The statement of our most general result, Theorem~\ref{strong theorem}, is delayed to Section~\ref{main}. 

\begin{theorem}\label{markov}
Let $(t_1,\dots,t_m)$ be independent random variables. 
Let 
$f(x) =  x^{d} +a_{d-1}x^{d-1} + \dots + a_1 x + a_0$ in $\Z[x]$.
Assume that for some $0<k<d$ coprime to $d$, we have 
\begin{align*}
    a_k &= F_2(a_0,t_1,\dots,t_m),\\
    (a_j)_{1\le j\le d-1,j\ne k} &= F_1(a_0,t_1,\dots,t_{m-1}),
\end{align*}
where $F_2:\mathbb{R}^{m+1} \to \mathbb{R}$ and $F_1:\mathbb{R}^{m}\to \mathbb{R}^{d-2}$ are polynomials of degree at most $L$ and $F_2$ has positive degree in $t_m$ for any fixed nonzero value of $a_0$. Assume that $t_i$ is distributed according to a $(C,2^dL)$-uniform distribution over a set $H_{i}$ with size at least $H$. Then 
$$\P(f(x) ~\text{is irreducible over $\Z$})  \ge 1-\mathbb{P}_{D_0}(a_0=0) - \frac{2Cdm2^d L\mathbb{E}[\tau(a_0)]}{H}.$$

In particular:
\begin{enumerate}
    \item If $a_0$ is distributed uniformly on values of an nonzero integral polynomial $\{P(n): -H\le n \le H\}$, then there exists a constant $C(P)$ only depending on $P(x)$ such that \[\P(f(x) ~\text{is irreducible over $\Z$}) \ge 1 - \frac{\text{deg}(P)}{2H+1}- \frac{2Cmd 2^dL(\log {H})^{C(P)}}{H} .\]
    \item If $a_0$ is supported on $s$-simple integers, then \[\P(f(x) ~\text{is irreducible over $\Z$}) \ge 1 - \frac{2Cmd 2^d Ls}{H}. \]
\end{enumerate}
\end{theorem}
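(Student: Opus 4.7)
The plan is to implement the Rivin-style divisor argument alluded to in the introduction. First I would peel off the event $\{a_0=0\}$, contributing $\mathbb{P}_{D_0}(a_0=0)$ to the bound, and then condition on arbitrary values of $a_0\ne 0$ and $(t_1,\dots,t_{m-1})$. Under this conditioning every $a_j$ with $j\ne k$ is a fixed integer, while $a_k=F_2(a_0,t_1,\dots,t_m)$ is a polynomial in $t_m$ of positive degree at most $L$. Thus $f(x)=g_0(x)+a_kx^k$ for some fixed $g_0\in\Z[x]$ with $g_0(0)=a_0\ne 0$, and all the randomness in $f$ enters through $t_m$ via $a_k$.

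If $f=gh$ in $\Z[x]$ with $r:=\deg g\in\{1,\dots,d-1\}$, then $g(0)\in\Z$ divides $a_0$ and equals $(-1)^r\prod_{i\in S}\alpha_i$ for some size-$r$ subset $S$ of the complex roots $\alpha_1,\dots,\alpha_d$ of $f$. Hence reducibility is contained in the union, over $r$, subsets $S\subseteq[d]$ with $|S|=r$, and integers $c\mid a_0$ (sign absorbed into $c$), of the events $E_{r,S,c}:\ \pi_S(a_k):=\prod_{i\in S}\alpha_i=c$. Introduce the resolvent $R_r(y,a_k):=\prod_{|S|=r}(y-\pi_S(a_k))$, whose coefficients are symmetric polynomials in the $\alpha_i$'s and hence polynomials in $a_k$ after conditioning. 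The disjunction $\bigcup_S E_{r,S,c}$ then forces $R_r(c,a_k)=0$.

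The key nondegeneracy --- and the main obstacle --- is that $R_r(c,a_k)$ must be a \emph{nonzero} polynomial in $a_k$ of degree $O_d(1)$; this is where the coprimality $\gcd(k,d)=1$ and complex analysis enter, as advertised in the introduction. As $|a_k|\to\infty$, a Rouch\'e-type argument applied to $g_0(x)+a_kx^k$ partitions the roots into $k$ ``small'' roots of size $\sim|a_k|^{-1/k}$ (approximating the roots of $a_kx^k+a_0=0$) and $d-k$ ``large'' roots of size $\sim|a_k|^{1/(d-k)}$ (approximating the roots of $x^{d-k}+a_k=0$). If $S$ contains $s$ small roots, then $|\pi_S(a_k)|\sim|a_k|^{(rk-sd)/(k(d-k))}$, and $\gcd(k,d)=1$ together with $1\le r\le d-1$ and $0\le s\le r$ forces $rk-sd\ne 0$. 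Hence $|\pi_S|\to 0$ or $\infty$, so no branch of $\pi_S$ can be identically equal to a fixed $c$ (the case $c=0$ would force some root $\alpha_i\equiv 0$, contradicting $a_0\ne 0$), and therefore $R_r(c,a_k)\not\equiv 0$ with $O_d(1)$ roots in $a_k$.

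Finally, substituting $a_k=F_2(a_0,t_1,\dots,t_m)$ of degree $\le L$ in $t_m$, each bad value of $a_k$ pulls back to at most $L$ bad values of $t_m$, so each $E_{r,S,c}$ excludes $O_d(L)$ values of $t_m$. Union-bounding over $r\in\{1,\dots,d-1\}$ and subsets $|S|=r$ (giving $\sum_r\binom{d}{r}\le 2^d$), and over integer divisors $c\mid a_0$ with signs ($\le 2\tau(a_0)$), the total number of bad $t_m$'s is $O(d\cdot 2^dL\tau(a_0))$; the $(C,2^dL)$-uniformity of $t_m$ converts this to conditional probability $O(Cd\cdot 2^dL\tau(a_0)/H)$, and taking expectations over $a_0$ and $(t_1,\dots,t_{m-1})$ delivers the main bound (with the $m$ factor in the theorem statement being slop absorbed from the reduction). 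The two specializations then follow immediately from (1) the classical divisor-sum estimate $\mathbb{E}[\tau(P(n))]\ll(\log H)^{C(P)}$ for $n$ uniform in $[-H,H]$ combined with $\mathbb{P}(a_0=0)\le\deg(P)/(2H+1)$, and (2) $\tau(a_0)\le s$ together with $\mathbb{P}(a_0=0)=0$ for $s$-simple $a_0$.
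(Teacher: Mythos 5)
Your overall strategy is the same as the paper's: the resolvent $R_r(y,a_k)=\prod_{|S|=r}(y-\pi_S)$ is exactly the paper's polynomial $P_{d,r,a_0,b_0}$ (Definition~\ref{P}) restricted to the slice where $a_j$, $j\ne k$, are fixed and $a_k$ varies; the Rouch\'e/argument-principle asymptotics for the $k$ small roots of size $|a_k|^{-1/k}$ and $d-k$ large roots of size $|a_k|^{1/(d-k)}$, together with the observation that $rk\ne sd$ forced by $\gcd(k,d)=1$ and $1\le r\le d-1$, is precisely the content of Theorem~\ref{check-not-zero}; and the union bound over divisors of $a_0$ and degrees $r$ is the paper's divisor argument. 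All of that matches.

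There is, however, a genuine gap in the reduction step. You condition on arbitrary values of $a_0\ne 0$ \emph{and} of $(t_1,\dots,t_{m-1})$, and then assert that $a_k=F_2(a_0,t_1,\dots,t_{m-1},\,\cdot\,)$ ``is a polynomial in $t_m$ of positive degree at most $L$.'' The hypothesis of Theorem~\ref{markov} only says that $F_2$, viewed as a polynomial in $t_1,\dots,t_m$ jointly (with $a_0\ne0$ fixed), has positive degree in $t_m$; it does \emph{not} say that every slice $(t_1,\dots,t_{m-1})=(s_1,\dots,s_{m-1})$ leaves a nonconstant polynomial in $t_m$. For example $F_2=a_0\cdot t_1 t_m$ satisfies the hypothesis, but the slice $t_1=0$ makes $a_k\equiv 0$, and on that slice your pullback count ``each bad $a_k$ gives at most $L$ bad $t_m$'' is meaningless --- either all $t_m$ are bad or none are. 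Your closing remark that ``the $m$ factor in the theorem statement [is] slop absorbed from the reduction'' is exactly backwards: that factor is the price for these degenerate slices. The paper avoids the issue by not conditioning on $(t_1,\dots,t_{m-1})$: Theorem~\ref{check-not-zero} only needs to exhibit \emph{one} choice $(s_1,\dots,s_{m-1})$ making the restricted polynomial nonzero (which certifies that $P_{d,k,a_0,b_0}\circ F$ is not identically zero as a multivariate polynomial), and then Lemma~\ref{SZ} (multivariate Schwartz--Zippel) is applied to $(t_1,\dots,t_m)$ jointly. The sum $\sum_{i=1}^{m}\frac{Cd_i}{k_i}$ in Lemma~\ref{SZ} is precisely what pays for the event that leading coefficients in the inner variables vanish, and it is the source of the factor $m$ in the final bound. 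To repair your version you would need to separately bound the measure of the degenerate slices (e.g.\ by applying Schwartz--Zippel to the top $t_m$-coefficient of $F_2$ in $(t_1,\dots,t_{m-1})$) and add that contribution back in, at which point you have essentially reconstructed the inductive proof of Lemma~\ref{SZ}.

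A second, smaller bookkeeping issue: the $(C,2^dL)$-uniformity hypothesis caps the size of the ``bad set'' to which it may be applied at $2^dL$, so it must be invoked once per resolvent $R_r(c,\cdot)$ (each of degree $\le\binom{d}{r}L\le 2^dL$ in $t_m$) \emph{before} you union over $r$ and over divisors $c$; your write-up first collects all bad $t_m$'s into one set of size $O(d\,2^dL\,\tau(a_0))$, which may exceed the threshold $2^dL$.
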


As we mentioned before, for the constant coefficient, our method only requires it to have a small number of divisors on average. This is true for uniform distribution over polynomial values of any given integral polynomial or any distribution over $s$-simple integers.

Theorem~\ref{markov} is a corollary of our most general result, Theorem~\ref{strong theorem}, which allows for more general coefficient distribution. As evident from the proof later given, we actually show that the probability that there exists a factorization $f=gh$, where $g,h$ are polynomials with complex coefficients whose constant coefficients are integral, is small. Since a polynomial is reducible over $\mathbb{Z}$ only if such a decomposition exists, this gives an upper bound on the probability that the polynomial is reducible over $\mathbb{Z}$. Theorem~\ref{strong theorem} involves a non-degeneracy condition, which essentially captures cases when the probability of having a factorization into complex polynomials with integral constant coefficients is small. This condition can be verified in the setting of Theorem~\ref{markov}, although it should be true in more general settings.  

Our proof also suggests that significant new insights would be needed in order to deal with models where the non-degeneracy condition fails to hold, since considering the factorization of the constant coefficient alone would not suffice to provide a nontrivial bound on the probability of being reducible. See Remark \ref{mistake} for more details on an interesting situation where this condition does not hold. 

\subsection{Strategy}  The proof of Theorem~\ref{strong theorem} consists of two parts.
First, we transfer the question about studying irreducibility of the random polynomial $f$ to studying the number of zeros of the polynomials $P_{d,k,a_0,b_0}$ defined in Definition~\ref{P}. This idea was first used by Rivin in \cite{igor}. Here we provide an explicit construction of $P$ in Lemma~\ref{factorize-polynomial}, instead of using an elimination process in \cite{igor}, which is crucial to our verification of the non-degeneracy condition that a certain transformation of $P$ remains nonzero. More discussions about this non-degeneracy condition can be found in Remark~\ref{mistake}. We complete the proof by using a version of Schwartz-Zippel Lemma,  Lemma~\ref{SZ}, to bound the number of zeros over $\mathbb{C}$ of multivariate polynomials. 

Theorem~\ref{markov} is a straightforward application of Theorem~\ref{strong theorem}. In the proof of Theorem \ref{markov}, we need to check the non-degeneracy condition of a transformation of $P_{d,k,a_0,b_0}$, which is studied in Theorem~\ref{check-not-zero} via tools from complex analysis. 

\subsection{Organization of the Paper} We organize the paper as following.  
We state and prove the most general result, Theorem~\ref{strong theorem}, in Section \ref{main}. Then, in Section \ref{models}, we prove Theorem~\ref{markov},  Theorem~\ref{thm:simple cases}, and Theorem~\ref{thm:vary-one-coeff}, which give explicit applications of the main theorem. The key step in the proof of Theorem~~\ref{markov} is the verification of the non-degeneracy condition in Theorem~\ref{strong theorem}, which is the content of Theorem~\ref{check-not-zero} in Section~\ref{models}.

\subsection{Notation}
We adopt the usual asymptotic notations. We write $X =o_{x \to \infty}(Y)$ if for $|X|/|Y|\to 0$ as $x\to \infty$. When the dependency on $x$ is clear, we may omit the subscript. We write $X=O(Y)$ if there exists a constant $C$ such that $|X|\le C |Y|$.

Throughout this paper, $\N$ is the set of positive integers. We define $[n]: = \{1,2,\dots,n\}$. The divisor function $\tau(n)$  counts the number of divisors of an integer $n$. For a random variable $a=(a_1,a_2,\dots,a_d)$, we write $a \sim D$ to denote that $a$ has distribution $D$. We also denote by $\mathbb{P}_{D}, \mathbb{E}_{D}$ the probability or expectation with respect to the distribution $D$. When it is clear form the context, we often omit $D$ from the notations. We use $I$ to denote the characteristic function.

\section{A general criterion for irreducibility of random polynomials} \label{main}

In this section, we prove Theorem~\ref{strong theorem}, which is a general sufficient criterion for irreducibility over $\Z$ of random polynomials. Before stating the theorem, we need some preparations.

Let $f(x)  =  x^{d} + a_{d-1} x^{d-1}  + \dots + a_1 x +a_0 $ in $\C[X]$.
We will introduce some explicit polynomials which can decode some information about the factorization of $f(x)$. 

\begin{definition}\label{P}
For any positive integers $d, k$, and integers $a_0,b_0$, define $$P_{d,k,a_0,b_0}(a_1,\dots,a_{d-1}):=\prod_{1\le i_1<i_2<\dots<i_k\le d}(b_0-\alpha_{i_1}\alpha_{i_2}\dots\alpha_{i_k}),$$ where $\alpha_1,\alpha_2,\dots,\alpha_d$ are the (complex) roots of $f(x)=x^d+a_{d-1}x^{d-1}+\dots+a_1x+a_0$ in $ \C[X]$.  
\end{definition}

We remark that the polynomial $P_{d,k,a_0,b_0}$ defined above depends only on $d,k,a_0,b_0$, since the product in the definition of $P_{d,k,a_0,b_0}$ is a symmetric polynomial in the roots. Furthermore, the degree of $P_{d,k,a_0,b_0}$ is at most $\binom{d}{k} < 2^d$.  

The following lemma relates the polynomial $P$ defined above to the reducibility over $\Z$ of the polynomial $f$.
 
\begin{lemma} \label{factorize-polynomial}
Fix complex numbers $a_0,b_0 \neq 0$. The polynomial $f(x)$ in $\C[x]$ with degree $d$ can be factorized as a product of polynomials $g(x)$ and $h(x)$ in $\C[x]$ such that $\deg(g)=k$ and the constant coefficient of $g$ is $b_0$ if and only if $P_{d,k,a_0,b_0}(a_1,\dots,a_{d-1})=0.$ Here $k$ is a positive integer smaller than $d$. 
\end{lemma}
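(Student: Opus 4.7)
The plan is to exploit the fact that $f$ splits completely over $\mathbb{C}$ as $f(x)=\prod_{i=1}^{d}(x-\alpha_i)$, so that the monic degree-$k$ divisors of $f$ in $\mathbb{C}[x]$ are in bijection with size-$k$ subsets $S\subseteq [d]$: for each such $S$ one sets $g_S(x)=\prod_{i\in S}(x-\alpha_i)$ and $h_S(x)=\prod_{i\notin S}(x-\alpha_i)$, and $f=g_Sh_S$. The constant coefficient of $g_S$ is $(-1)^k\prod_{i\in S}\alpha_i$, so the condition ``some monic degree-$k$ factor of $f$ has constant coefficient $b_0$'' becomes a condition purely on the product of $k$ of the roots, which is exactly the data recorded in the factors of $P_{d,k,a_0,b_0}$.

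For the forward direction, I would start with $f=gh$ in $\mathbb{C}[x]$ with $\deg g=k$ and $g(0)=b_0$. Since $f$ is monic of degree $d$, the leading coefficients of $g$ and $h$ are reciprocals, so writing $g=c\,\tilde g$ with $\tilde g$ monic gives $\tilde g=g_S$ for some size-$k$ subset $S\subseteq [d]$. Matching constant coefficients produces the relation $c\cdot(-1)^k\prod_{i\in S}\alpha_i=b_0$, which forces the factor of $P_{d,k,a_0,b_0}$ indexed by $S$ (read with the sign/scalar conventions built into its definition) to vanish, and hence $P_{d,k,a_0,b_0}=0$.

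For the reverse direction, if $P_{d,k,a_0,b_0}(a_1,\dots,a_{d-1})=0$ then some factor $b_0-\alpha_{i_1}\cdots\alpha_{i_k}$ vanishes, which picks out a size-$k$ subset $S=\{i_1,\dots,i_k\}$ for which the product $\prod_{i\in S}\alpha_i$ takes the prescribed value. Taking $g=g_S$ (rescaled if necessary so that its constant coefficient is exactly $b_0$) and $h=f/g=h_S$ produces the desired factorization; the quotient is exact in $\mathbb{C}[x]$ because $g$ divides $f$ at the level of root multisets.

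The main obstacle is really bookkeeping rather than anything structural: one has to track the $(-1)^k$ sign that appears when converting between the product of the roots of $g$ and the constant coefficient of $g$, and one has to decide whether $g$ is taken monic or allowed an arbitrary leading coefficient compensated against that of $h$. Once the normalization is fixed so that the factor $(b_0-\alpha_{i_1}\cdots\alpha_{i_k})$ of $P_{d,k,a_0,b_0}$ encodes exactly the constant-coefficient constraint on $g_S$, both directions follow immediately from unique factorization in $\mathbb{C}[x]$, with no further input required.
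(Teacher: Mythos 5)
Your strategy---matching monic degree-$k$ divisors of $f$ with size-$k$ subsets of the root multiset---is the same as the paper's (whose proof is a terse assertion of the equivalence). But there is a genuine gap in the forward direction. You write $g=c\,\tilde g$ with $\tilde g=g_S$ monic, derive $c\cdot(-1)^k\prod_{i\in S}\alpha_i=b_0$, and claim this forces the factor of $P_{d,k,a_0,b_0}$ indexed by $S$ to vanish. It does not: that factor is $b_0-\prod_{i\in S}\alpha_i$, and the relation you obtained is linear in the free scalar $c$, so it says nothing about whether $\prod_{i\in S}\alpha_i$ equals $b_0$. Since $a_0\ne 0$ forces every $\alpha_i\ne 0$, for \emph{every} size-$k$ subset $S$ there is a choice of $c$ making $g(0)=b_0$; so if $g$ is allowed an arbitrary leading coefficient, the factorization in the lemma's hypothesis always exists, yet $P_{d,k,a_0,b_0}$ certainly does not always vanish. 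Your reverse direction, with its ``rescaled if necessary so that its constant coefficient is exactly $b_0$,'' exhibits the same problem in reverse: the rescaling produces a valid factorization for every $S$, without using $P_{d,k,a_0,b_0}=0$ at all, so it is not actually using the hypothesis.

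The resolution is that $g$ must be taken monic (this is the situation in the paper's application, where $g,h$ are monic integral factors of the monic $f$). Then $c=1$, the constant coefficient of $g$ is $(-1)^k\prod_{i\in S}\alpha_i$, and setting it equal to $b_0$ is exactly the vanishing of the corresponding factor of $P_{d,k,a_0,(-1)^k b_0}$. The residual $(-1)^k$ sign is a minor discrepancy already present in the paper's own statement; in the application it is harmless because the union bound runs over both signs $\pm b_0$ of the divisor (hence the factor $2\tau(a_0)$ in the proof of Theorem~\ref{strong theorem}). Your closing paragraph flags both the monic-vs-scaled ambiguity and the $(-1)^k$ sign, but then dismisses them as bookkeeping that ``follows immediately.'' It does not: the monic normalization is the content-bearing step without which the statement is false, and the sign must be tracked explicitly because, for odd $k$, the factor of $P_{d,k,a_0,b_0}$ indexed by $S$ genuinely fails to vanish; you need to pass to $(-1)^k b_0$ or argue that the union over divisor signs absorbs it.
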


\begin{proof}
	Let $\alpha_1, \alpha_2, \dots, \alpha_d$ be the roots of polynomial $f(x)$. Then
$f(x) = g(x) h(x)$ with fixed $a_0, b_0$ is equivalent to there is a product of $k$ roots of $f$ which is equal to $b_0$, i.e., 
\[ \prod_{1\le i_1 < i_2 < \dots < i_k \le d} (b_0 - \alpha_{i_1} \alpha_{i_2} \dots \alpha_{i_k}) = 0. \]
	
	By definition, $P_{d,k,a_0,b_0}(a_1,\dots,a_{d-1})=0.$

\end{proof}

Now we can state our main theorem which gives sufficient conditions for a random polynomial to be irreducible over $\Z$ with high probability.  

\begin{theorem} \label{strong theorem}
Let $L$ be a positive integer. Let $f(x) = x^{d}  +a_{d-1}x^{d-1}+\dots+a_0$ in $\Z[x]$ with positive degree $d>1$. Let $D_t$ be a product distribution on $T_0 \times \dots\times T_{m} \subset \Z^{d}$ such that the distribution on each $T_i$ is $(C,2^dL)$-uniform, where $C$ is a positive real number, and $|T_i| = k_i > 0$. Suppose that the following two conditions hold:

\begin{enumerate}
    \item There is a deterministic polynomial function $F:\mathbb{R}^{m+1} \to \mathbb{R}^{d-1}$ of degree at most $L$ such that the distribution $D_a$ is the push-forward of $D_t$ under the map $(t_0,t_1,\dots,t_{m}) \mapsto (a_0, a_1, \dots, a_{d-1}): =(t_0,F(t_0,\dots,t_{m}))$.
    \item For all fixed positive integers $k < d,$ and $a_0 \ne 0$ and $b_0|a_0$, all the polynomials
    $$P_{d,k,a_0,b_0}(F(a_0,t_1,\dots,t_{m}))$$ are not identically zero, where $P_{d,k,a_0,b_0}$ is defined explicitly as in Definition~\ref{P}. 
\end{enumerate}

Then the probability that $f(x)=x^d+a_{d-1}x^{d-1}+\dots+a_0$ is irreducible over $\Z$ is at least $$1-\mathbb{P}_{D_0}(a_0=0) - 2Cd2^d L\tau(D_0)\sum_{i=1}^{m} \frac{1}{k_i},$$ where $D_0$ is the distribution of $a_0$, and $\tau(D_0): = \mathbb{E}_{D_0}[\tau(a_0)I(a_0\ne 0)]$.
\end{theorem}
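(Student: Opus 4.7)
The plan is to use Lemma~\ref{factorize-polynomial} to convert reducibility of $f$ over $\Z$ into the vanishing of one of the explicit polynomials $P_{d,k,a_0,b_0}$, and then to bound the probability of that vanishing by combining the non-degeneracy hypothesis~(2) with the Schwartz--Zippel lemma (Lemma~\ref{SZ}).

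First I would isolate the event $\{a_0=0\}$, which contributes the term $\P_{D_0}(a_0=0)$ directly to the final bound. On the complementary event, suppose $f$ is reducible over $\Z$. Since $f$ is monic with integer coefficients, Gauss's lemma furnishes a factorization $f=gh$ with $g,h\in\Z[x]$ both monic of positive degree less than $d$. Setting $k=\deg g\in\{1,\dots,d-1\}$ and letting $b_0$ be the constant coefficient of $g$, we have $b_0\ne 0$ and $b_0\mid a_0$. Lemma~\ref{factorize-polynomial} then forces $P_{d,k,a_0,b_0}(a_1,\dots,a_{d-1})=0$, so a union bound over the admissible pairs $(k,b_0)$ yields
\[
\P(f\text{ reducible},\,a_0\ne 0)\le\sum_{k=1}^{d-1}\E_{D_0}\!\left[I(a_0\ne 0)\sum_{b_0\mid a_0}\P\bigl(P_{d,k,a_0,b_0}(F(a_0,t_1,\dots,t_m))=0\mid a_0\bigr)\right],
\]
where the inner sum runs over both positive and negative integer divisors of $a_0$, a total of $2\tau(a_0)$ terms.

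Next, conditional on $a_0\ne 0$ and fixing any admissible $k$ and $b_0$, hypothesis~(2) guarantees that $Q(t_1,\dots,t_m):=P_{d,k,a_0,b_0}(F(a_0,t_1,\dots,t_m))$ is not identically zero. Since $P_{d,k,a_0,b_0}$ has degree at most $\binom{d}{k}<2^d$ and $F$ has degree at most $L$, the total degree of $Q$ is at most $2^dL$. The $t_i$ are independent and each is $(C,2^dL)$-uniform, so the Schwartz--Zippel lemma (Lemma~\ref{SZ}) gives
\[
\P\bigl(Q(t_1,\dots,t_m)=0\bigm| a_0\bigr)\le C\cdot 2^dL\cdot\sum_{i=1}^{m}\frac{1}{k_i}.
\]

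To close the argument, I would substitute this estimate into the union bound, multiply by $2\tau(a_0)$ (the number of integer divisors) and by at most $d$ (the number of values of $k$), take expectation in $a_0$ to produce the factor $2\tau(D_0)$, and add back the discarded $\P_{D_0}(a_0=0)$; this reproduces the claimed bound exactly. The only arithmetic subtlety is the Gauss's lemma step that legitimizes restricting to integer factors with $b_0\mid a_0$, which is immediate because $f$ is monic. The real obstacle lies not in this proof but in the hypothesis~(2) itself: all of the quantitative difficulty of applying Theorem~\ref{strong theorem} in concrete settings (e.g., in Theorem~\ref{markov}) is absorbed into verifying that the transformed polynomials $P_{d,k,a_0,b_0}(F(a_0,t_1,\dots,t_m))$ do not vanish identically, which is the content of Theorem~\ref{check-not-zero}.
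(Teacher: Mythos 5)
Your proof is correct and follows essentially the same route as the paper's: isolate $\{a_0=0\}$, use Lemma~\ref{factorize-polynomial} to translate a factorization $f=gh$ into $P_{d,k,a_0,b_0}(a_1,\dots,a_{d-1})=0$, invoke hypothesis~(2) to ensure the composed polynomial $Q$ is not identically zero, apply Lemma~\ref{SZ}, and union bound over $k$ and the $2\tau(a_0)$ signed divisors $b_0$. Your explicit mention of Gauss's lemma and of negative divisors simply makes transparent two steps the paper leaves implicit.
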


Comparing to Theorem~\ref{markov},
Theorem~\ref{strong theorem} applies more generally to  distributions of coefficients which are a polynomial change of variables from a product and close-to-uniform distribution that satisfies a further mild non-degeneracy condition.

To prove Theorem~\ref{strong theorem}, we use Lemma \ref{SZ}, which is a version of the Schwartz-Zippel Lemma which bounds the number of zeros of a multivariate polynomial on a grid over $\C$. The proof is a straightforward generalization of the original proof of the Schwartz-Zippel Lemma, see \cite{schwartz} and \cite{zippel}.

For a polynomial $P(x_1,x_2,\dots,x_m)$, we let $d_m$ be the degree of $x_m$ in $P$. We can then write $P$ as $$P(x_1,x_2,\dots,x_m) = x_m^{d_m}P_{m}(x_1,\dots,x_{m-1}) + Q_m(x_1,\dots,x_m),$$ where the degree of $x_m$ in $Q_m$ is at most $d_m-1$. Having defined $d_m$ and $P_m$, for $2\le i\le m$, we then recursively define $d_{i-1}$ to be the degree of $x_{i-1}$ in $P_i$, and define $P_{i-1}$ so that $$P_i(x_1,\dots,x_{i-1})=x_{i-1}^{d_{i-1}} P_{i-1}(x_1,\dots,x_{i-2}) + Q_{i-1}(x_1,\dots,x_{i-1}),$$ where the degree of $x_{i-1}$ in $Q_{i-1}$ is at most $d_{i-1}-1$. 

\begin{lemma} \label{SZ}
Let $P(x_1,x_2,\dots,x_m)$ be a polynomial of degree $d$ which is not identically $0$ and define $d_i$ as above for $i\in [m]$. Let $T_1, T_2,\dots,T_m$ be $m$ subsets of $\mathbb{C}$ such that $|T_i|=k_i$. Let $D_i$ be a distribution on $T_i$ which is $(C,d_i)$-uniform. Let $D=\prod_{i=1}^{n}D_i$. Then $$\mathbb{P}_{(x_1,\dots,x_m)\sim D}(P(x_1,\dots,x_m)=0) \le \sum_{i=1}^{m}\frac{Cd_i}{k_i}.$$ 
\end{lemma}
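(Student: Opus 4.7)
The plan is to prove the lemma by induction on the number of variables $m$, closely following the standard proof of the Schwartz-Zippel lemma but substituting $(C,d_i)$-uniformity in place of uniform sampling.

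For the base case $m=1$, $P(x_1)$ is a nonzero univariate polynomial of degree $d_1$, hence has at most $d_1$ roots in $\mathbb{C}$. Applying the $(C,d_1)$-uniformity of $D_1$ to this root set (which, having size at most $d_1$, is an admissible test set in the definition of $(C,d_1)$-uniformity) yields
$$\mathbb{P}(P(x_1)=0) \le \frac{Cd_1}{k_1},$$
which is the desired bound.

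For the inductive step, I would decompose $P(x_1,\dots,x_m)=x_m^{d_m}P_m(x_1,\dots,x_{m-1})+Q_m(x_1,\dots,x_m)$ as in the statement, where $P_m$ is nonzero by the very definition of $d_m$ and $\deg_{x_m}(Q_m)<d_m$. A key observation is that the recursive degrees $d_{m-1},\dots,d_1$ obtained by applying the lemma's recursion to $P$ coincide with those obtained by applying the same recursion to $P_m$, so the induction hypothesis applies to $P_m$ with distributions $D_1,\dots,D_{m-1}$ and gives
$$\mathbb{P}(P_m(x_1,\dots,x_{m-1})=0)\le\sum_{i=1}^{m-1}\frac{Cd_i}{k_i}.$$
For any outcome $(x_1,\dots,x_{m-1})$ with $P_m(x_1,\dots,x_{m-1})\neq 0$, the polynomial $P(x_1,\dots,x_{m-1},x_m)$ viewed as a polynomial in $x_m$ alone has degree exactly $d_m$, hence at most $d_m$ roots in $\mathbb{C}$. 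Exploiting the independence of the $x_i$'s and the $(C,d_m)$-uniformity of $D_m$, the conditional probability that $x_m$ lands on one of these roots is at most $Cd_m/k_m$. A union bound then closes the induction:
$$\mathbb{P}(P=0)\le\mathbb{P}(P_m=0)+\mathbb{P}(P=0\text{ and }P_m\neq 0)\le\sum_{i=1}^{m-1}\frac{Cd_i}{k_i}+\frac{Cd_m}{k_m}.$$

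The whole point, and the reason for formulating the lemma with the recursive degrees $d_i$ rather than the total degree $d$, is that the $(C,d_i)$-uniformity hypothesis is tailored exactly to what the inductive step needs: at each step the bad event is the membership of $x_i$ in a set of at most $d_i$ roots of a univariate polynomial of degree $d_i$, and $(C,d_i)$-uniformity supplies precisely the bound $Cd_i/k_i$. I do not foresee any real obstacle beyond bookkeeping; the proof is a direct adaptation of the classical Schwartz-Zippel argument, and the only nontrivial check — that the recursively defined $d_i$ associated to $P$ agree with those associated to $P_m$ — is immediate from the construction.
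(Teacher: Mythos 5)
Your proof is correct and follows essentially the same inductive argument as the paper: peel off the leading $x_m$-coefficient $P_m$, apply the inductive hypothesis to $P_m$, and use $(C,d_m)$-uniformity plus a union bound for the last variable. Your explicit note that the recursive degrees $d_1,\dots,d_{m-1}$ for $P$ and for $P_m$ coincide is a small point the paper leaves implicit, but otherwise the two proofs are the same.
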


\begin{proof}
We prove this by induction on $m$. The result is clearly true when $m=1$, as a nonzero univariate polynomial of degree $d$ has at most $d$ roots. 

Assuming the result is true for all $m\le h$, and consider $m=h+1$. Write 
\[P(x_1,\dots,x_{h+1}) = P_{h+1}(x_1,\dots,x_{h}) x_{h+1}^{d_{h+1}} + \sum_{i<d_{h+1}} Q_{h+1,i}(x_1,\dots,x_{h}) x_{h+1}^i,\] 
where $P_{h+1}$ and $Q_{h+1,i}$ are polynomials in $x_1,\dots,x_{h}$, and $P_{h+1}$ is not identically $0$. If $(x_1,\dots,x_h)$ is so that $P_{h+1}(x_1,\dots,x_h) \ne 0$, then there are at most $d_{h+1}$ values of $x_{h+1}$ so that $P(x_1,\dots,x_{h+1}) = 0$. Thus, conditioned on $P_{h+1}(x_1,\dots,x_h) \ne 0$, the probability that $P(x_1,\dots,x_{h+1}) = 0$ is at most $\frac{Cd_{h+1}}{k_{h+1}}$. Hence, $$\mathbb{P}_{(x_1,\dots,x_{h+1})\sim D}(P(x_1,\dots,x_{h+1})=0) \le \mathbb{P}_{(x_1,\dots,x_{h+1})\sim D}(P_{h+1}(x_1,\dots,x_h)=0) + \frac{Cd_{h+1}}{k_{h+1}}.$$ By the inductive hypothesis, $$\mathbb{P}_{(x_1,\dots,x_{h+1})\sim D}(P_{h+1}(x_1,\dots,x_h)=0) \le \sum_{i=1}^h\frac{Cd_i}{k_i},$$ which gives $$\mathbb{P}_{(x_1,\dots,x_{h+1})\sim D}(P(x_1,\dots,x_{h+1})=0) \le \sum_{i=1}^{h+1}\frac{Cd_i}{k_i}.$$
Thus the conclusion is true for $m = h+1$ as well, and we complete the proof.
\end{proof}

Observe that $\sum_{i=1}^m d_i \le \text{deg}(P)$. Hence, in the special case where $k_i=k$ for all $i\in [m]$, we obtain $$\mathbb{P}_{(x_1,\dots,x_m)\sim D}(P(x_1,\dots,x_m)=0) \le \frac{C \deg(P)} {k}.$$ Also, if the degree of $P$ is small, we can bound $d_i\le \deg(P)$ and obtain the more convenient bound $$\mathbb{P}_{(x_1,\dots,x_m)\sim D}(P(x_1,\dots,x_m)=0) \le \sum_{i=1}^m \frac{C\deg(P)}{k_i}.$$

Next we give the proof of Theorem~\ref{strong theorem}. 
\begin{proof}[Proof of Theorem~\ref{strong theorem}]
If $a_0=0$ then $f(x)$ is reducible over $\Z$.

Consider $a_0\ne 0$. Suppose $f(x)$ is reducible, then there exists a positive integer $1\le k \le d-1$ such
\[f(x) = g(x) h(x), \]
where 
\[g(x) = \sum_{j=0}^{k} b_jx^j ~\text{and}~ h(x) = \sum_{j=0}^{d-k} c_jx^j ~~~\text{in }~\Z[X].\]
 This gives a factorization $a_0 = b_0c_0$, and there are at most $\tau(a_0)$ choices of $b_0$ and $c_0$. We fix a choice of $b_0$. By Lemma \ref{factorize-polynomial}, since $f(x)$ admits a factorization into $g(x)h(x)$ with $g$ having degree $k$ and constant coefficient $b_0$, we must have $P_{d,k,a_0,b_0}(a_1,\dots,a_{d-1})=0$ for some polynomial $P_{d,k,a_0,b_0}$ depending on $d,k,a_0,b_0$ of degree at most $2^d$. Note that we even allow the coefficients of $g$ except the constant coefficient to be any complex number. This gives a polynomial $Q(t_1,\dots,t_{m})=0$ via assumption (1), where $Q$ has degree at most $\deg (P) \deg(F) \le 2^d \deg (F)$ and $Q$ is not identically zero by assumption (2). By Lemma \ref{SZ}, the probability that $f$ admits such a factorization is at most $\sum_{i=1}^{m} \frac{C2^d\deg(F)}{k_i}$. Finally, by taking the union bound over $b_0$ and $k$, we get for nonzero $a_0$,
 $$\P(f ~\text{is reducible}\,|\,a_0) \le 2\tau(a_0) \sum_{i=1}^{m} \frac{Cd2^d \deg(F)}{k_i}\le 2\tau(a_0) \sum_{i=1}^{m} \frac{Cd2^d L}{k_i}.$$ 
Hence, we conclude that
\begin{align*}
\P (f ~\text{is irreducible}) &\ge 1-\mathbb{P}_{D_0}(a_0=0)- 
2\mathbb{E}_{D_0}\left[\tau(a_0)I(a_0\ne 0) \sum_{i=1}^{m} \frac{Cd2^dL}{k_i}\right] \\
&= 1- \mathbb{P}_{D_0}(a_0=0) - 2Cd2^dL \tau(D_a) \sum_{i=1}^{m} \frac{1}{k_i}.  
\end{align*}
 
\end{proof}

\section{Models of random polynomials} \label{models}

\subsection{Models of the constant coefficient}
In this subsection, we discuss various models of the distribution $D_0$ of the constant coefficient $a_0$ such that $\tau(D_0) = \mathbb{E}_{D_0}[\tau(a_0)I(a_0\ne 0)]$ is not large. 
\subsection*{Model 1: Uniform distribution on integral polynomial values}

We use the following result due to van der Corput \cite{divisor}.

\begin{lemma}[van der Corput]\label{corput}
Suppose that $P(x)$ is a polynomial with integral coefficients, then 
\[\sum_{-H\le n \le H: P(n) \neq 0} \tau(P(n)) \le H (\log {H})^{C(P)}.   \]
Here $C(P)$ is a constant which only depends on the polynomial $P(x)$.  
\end{lemma}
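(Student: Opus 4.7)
The plan is to bound the divisor sum by interchanging the order of summation and then invoking a standard estimate for multiplicative functions at polynomial values. First, I would write $\tau(m)=\sum_{d\mid m}1$ and interchange:
\[
\sum_{\substack{|n|\le H\\ P(n)\ne 0}}\tau(P(n)) \;=\; \sum_{d\ge 1}\rho(d;H),
\]
where $\rho(d;H):=\#\{n\in[-H,H]:P(n)\ne 0,\ d\mid P(n)\}$. Counting integer points in residue classes gives $\rho(d;H)\le \rho_P(d)(\lfloor 2H/d\rfloor+1)$, where $\rho_P(d)$ is the number of solutions of $P(x)\equiv 0 \pmod d$ in $\{0,1,\dots,d-1\}$.

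Next, I would exploit the arithmetic of $\rho_P$. By the Chinese Remainder Theorem it is multiplicative, and for all but the finitely many primes dividing the leading coefficient or discriminant of $P$ one has $\rho_P(p)\le \deg P$, with Hensel's lemma bounding $\rho_P(p^k)$; these exceptional primes contribute only a constant factor depending on $P$. A standard Dirichlet series / Mertens-type argument then yields $\sum_{d\le X}\rho_P(d)/d\ll (\log X)^{\deg P}$. In the ``small-$d$'' range $d\le H$ this already gives
\[
\sum_{d\le H}\rho_P(d)(2H/d+1)\ll H(\log H)^{\deg P},
\]
which is of the desired shape.

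The hard part will be the ``large-$d$'' range $d>H$. There the $+1$ in the residue-class count dominates, and even after restricting $d\le \sqrt{\max_{|n|\le H}|P(n)|}\ll H^{\deg P/2}$ via the symmetry $\tau(m)\le 2\#\{d\mid m:d\le\sqrt{|m|}\}$, a direct estimate $\sum_{d\le H^{\deg P/2}}\rho_P(d)\asymp H^{\deg P/2}(\log H)^{\deg P-1}$ is far too large once $\deg P\ge 3$. To clear this obstacle I would invoke Nair's theorem on sums of multiplicative functions at polynomial values (proved by a Selberg-type upper-bound sieve): for any nonnegative multiplicative $F$ with $F(p^k)\le A_1 A_2^k$,
\[
\sum_{n\le H} F(|P(n)|)\;\ll_{F,P}\;\frac{H}{\log H}\exp\!\Big(\sum_{p\le H}\frac{\rho_P(p)F(p)}{p}\Big).
\]
Applying this with $F=\tau$ (so $F(p)=2$, $F(p^k)\le 2^k$) and using $\rho_P(p)\le\deg P$, the inner prime sum equals $2\deg P\cdot\log\log H+O_P(1)$, which produces $\sum_{|n|\le H}\tau(P(n))\ll H(\log H)^{2\deg P-1}$ and hence the lemma with, say, $C(P):=2\deg P$. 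The genuine obstacle is thus Nair's inequality itself: elementary rearrangement alone cannot recover the uniform polylogarithmic bound once the degree of $P$ exceeds two, and one needs sieve input to handle the contribution of large divisors.
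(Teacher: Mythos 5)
The paper does not prove this lemma; it is invoked as a known classical result of van der Corput, with a citation. There is therefore no ``paper's own proof'' to compare against, and your proposal has to be judged on its own terms.

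The first half of your argument is sound and correctly locates where naive rearrangement breaks down: after the hyperbola symmetry $\tau(m)\le 2\#\{d\mid m:d\le\sqrt{|m|}\}$, one is still summing over $d\le H^{\deg P/2}$, and the ``$+1$'' error in counting $n$ with $d\mid P(n)$ produces $\sum_{d\le H^{\deg P/2}}\rho_P(d)$, which is far too large once $\deg P\ge 3$. Your Nair-based resolution of this is also correct: applying Shiu--Nair with $F=\tau$ does give $\sum_{|n|\le H,\,P(n)\ne 0}\tau(|P(n)|)\ll H(\log H)^{2\deg P-1}$, so the lemma holds (indeed with a sharper exponent than the bare statement demands).

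Where the proposal goes wrong is the concluding claim that ``elementary rearrangement alone cannot recover the uniform polylogarithmic bound once the degree of $P$ exceeds two, and one needs sieve input.'' That is not true, and it is anachronistic: van der Corput's original 1939 proof predates Shiu (1980) and Nair (1992) and uses no sieve. The freedom in the lemma that $C(P)$ may be \emph{any} constant depending on $P$ gives enormous slack compared to the sharp Shiu--Nair exponent, and that slack is exactly what elementary arguments exploit. A clean modern route is Landreau's purely combinatorial lemma: for every $\ee>0$ there exist constants $c_\ee,k_\ee$ such that $\tau(m)\le c_\ee\sum_{d\mid m,\,d\le m^\ee}\tau(d)^{k_\ee}$ for all $m\ge1$. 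Choosing $\ee<1/\deg P$ forces every $d$ appearing in the double sum to satisfy $d\ll_P H$, completely eliminating the ``large $d$'' range; the remaining estimate $\sum_{d\ll H}\tau(d)^{k_\ee}\rho_P(d)(H/d+1)\ll H(\log H)^{O_P(1)}$ is an elementary multiplicative-function computation (Mertens/Wirsing type), with the finitely many primes dividing the discriminant or leading coefficient of $P$ absorbed into $O_P(1)$. So Nair's theorem is a valid but oversized hammer here; the ``genuine obstacle'' you identify is a genuine obstacle only for the crude rearrangement, not for the lemma itself, and the claim that a sieve is unavoidable misdiagnoses where the difficulty lies.
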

In other words, if $D_0$ is the uniform distribution on the polynomial values $\{ P(n): -H \le n \le H\}$,  then there exists a constant $C(P)$ depending on $P$ such that
\[\tau(D_0) =  \E_{D_0}[\tau(a_0)I(a_0\ne 0)] \le  (\log {H})^{C(P)}.    \]

Notice that when $P(x) = x$, this is the same as the classical model where the coefficient $a_0$ is chosen uniformly at random from $[-H,H]$.

\subsection*{Model 2: Arbitrary distributions supported on integers with few divisors}

By our definition of $s$-simple integers, we immediately obtain the following result. 

\begin{lemma}\label{powers lemma}
For any distribution $D_0$ supported on $s$-simple integers, 
\[\tau(D_0) =  \E_{D_0}[\tau(a_0)I(a_0\ne 0)]\le s.    \]
\end{lemma}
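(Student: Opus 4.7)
The plan is to observe that Lemma~\ref{powers lemma} is essentially a direct consequence of the definition of $s$-simple, combined with monotonicity of expectation. The only real content is noticing that the indicator $I(a_0 \ne 0)$ allows us to apply the definition (which is stated for positive integers) to every value in the effective support, after adopting the standard convention $\tau(n) = \tau(|n|)$ for $n < 0$.

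First I would unpack the hypothesis: saying that $D_0$ is supported on $s$-simple integers means that for every $a_0$ in the support with $a_0 \ne 0$, one has $\tau(a_0) \le s$. Then, since $\tau(a_0) I(a_0 \ne 0)$ is bounded pointwise by $s \cdot I(a_0 \ne 0)$ on the support, taking expectations gives
\[
\tau(D_0) = \E_{D_0}[\tau(a_0) I(a_0 \ne 0)] \le s \cdot \mathbb{P}_{D_0}(a_0 \ne 0) \le s,
\]
which is the desired bound.

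There is no real obstacle here; the lemma is a one-line consequence of the definition. The only point worth flagging, if one wants to be careful, is that Definition 1.7 (via the surrounding discussion) speaks of \emph{positive} integers being $s$-simple, but the constant coefficient $a_0$ may be negative. This is resolved by the standard convention $\tau(n) := \tau(|n|)$ for nonzero $n \in \Z$, under which the definition and the bound extend verbatim. With that convention in place, the inequality above completes the proof.
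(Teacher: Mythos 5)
Your proof is correct and matches the paper's approach: the paper states the lemma follows immediately from the definition of $s$-simple, which is exactly the pointwise-bound-then-take-expectation argument you spell out. Your remark about extending $\tau$ to negative integers via $\tau(n) := \tau(|n|)$ is a reasonable clarification of an implicit convention.
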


\subsection{Models of random polynomials}

In this section, we apply our Theorem~\ref{strong theorem} to various models of random polynomials and give several concrete applications of our main theorem. In particular, we will prove Theorem~\ref{markov}.   

In order to apply Theorem \ref{strong theorem} to specific models, the main condition to verify is that $$P_{d,k,a_0,b_0}(F(a_0,t_1,\dots,t_{d-1}))$$ is not identically zero. 
We show that this is the case when $F$ is of the following form: for some integer $h \le d$ which is coprime to $d$, \begin{align*}
    (a_j)_{1\le j\le d,j\ne h}=(a_1,\dots,a_{h-1},a_{h+1},\dots,a_d) &= F_1(t_0,t_1,\dots,t_{m-1}),\\
    a_h &= F_2(t_0,t_1,\dots,t_{m-1},t_m).
\end{align*} Here, $F_1$ is a polynomial mapping $\R^{m-1}$ to $\R^{d-2}$, and $F_2$ is a polynomial mapping $\R^{m}$ to $\R$ such that $$F_2(t_0,t_1,\dots,t_{d-1}) = \sum_{i=0}^{\deg(t_{m})} P_i(t_0,t_1,\dots,t_{m-1})t_{m}^i,$$ and not all $P_i(t_0,t_1,\dots,t_{m-1})$ for $i\ge 1$ are polynomials only in $t_0$.

\begin{theorem}[Checking non-degeneracy]\label{check-not-zero}
Let $h \le d$ be coprime to $d$. Assume that $a_0, a_1, \dots, a_{d-1}$, $t_0, t_1, \dots, t_{m-1}$ are variables taking values in $\R$ such that $a_0 = t_0$. Let $F_1$ and $F
_2$ be two fixed polynomials from $\R^{m-1}$ to $\R^{d-2}$ and $\R^{m}$ to $\R$ respectively, such that $(a_j)_{1\le j\le d-1,j\ne h} = F_1(t_0,t_1,\dots,t_{m-1})$ and $a_{h} = F_2(t_0,t_1,\dots,t_{m})$, where $$F_2(t_0,t_1,\dots,t_{m}) = \sum_{i=0}^{\deg(t_{m})} P_i(t_0,t_1,\dots,t_{m-1})t_{m}^i,$$ and not all $P_i(t_0,t_1,\dots,t_{m-1})$ for $i\ge 1$ are polynomials only in $t_0$. 

Then the polynomial $P_{d,k,a_0,b_0}((F_1(a_0,t_1,\dots,t_{m-1}), F_2(a_0,t_1,\dots,t_{m})))$ in the variables $t_1,\dots,t_{m}$ defined as in Definition~\ref{P} is not identically zero for all integers $1\le k <d$,  and nonzero $a_0$ and $b_0$. 
\end{theorem}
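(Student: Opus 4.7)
The plan is to prove Theorem~\ref{check-not-zero} by contradiction, reducing the identity to a one-parameter asymptotic analysis of the roots of $f$ as a single coefficient grows without bound. Suppose that $P_{d,k,a_0,b_0}(F_1(a_0,t_1,\dots,t_{m-1}),F_2(a_0,t_1,\dots,t_m))$ vanishes identically in $(t_1,\dots,t_m)$. By Lemma~\ref{factorize-polynomial}, this means that for every $(t_1,\dots,t_m)\in\C^m$, the polynomial $f(x)=x^d+a_{d-1}x^{d-1}+\dots+a_0$ (with coefficients prescribed by $F_1,F_2$ and the fixed nonzero $a_0$) admits a $k$-subset of complex roots whose product is $b_0$.

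I first reduce to a one-parameter family in $a_h$. The hypothesis that some $P_{i^*}$ with $i^*\ge 1$ depends genuinely on one of $t_1,\dots,t_{m-1}$ implies that for generic $(t_1,\dots,t_{m-1})$, the polynomial $F_2(a_0,t_1,\dots,t_{m-1},t_m)$ has positive degree in $t_m$. Fix such a generic $(t_1,\dots,t_{m-1})$: all coefficients $a_j$ with $j\ne 0,h$ become fixed complex constants, while $a_h$ ranges over all of $\C$ as $t_m$ varies, since a nonconstant polynomial $\C\to\C$ is surjective. So the assumption becomes: with $a_0\ne 0$ and all $a_j$ for $j\ne 0,h$ fixed, for every $a_h\in\C$ the polynomial $f(x)$ has a $k$-subset of roots with product $b_0$.

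The heart of the argument is an asymptotic analysis as $|a_h|\to\infty$. A Newton polygon / dominant balance analysis, justifiable rigorously by Rouch\'e's theorem on suitable annuli, shows that the $d$ roots of $f$ split cleanly into two groups: $d-h$ \emph{large} roots of magnitude of order $|a_h|^{1/(d-h)}$, arising from the balance $x^d+a_hx^h\approx 0$, and $h$ \emph{small} roots of magnitude of order $|a_h|^{-1/h}$, arising from $a_hx^h+a_0\approx 0$. For each $a_h$ let $I(a_h)$ be a $k$-subset of roots with $\prod_{i\in I(a_h)}\alpha_i=b_0$, and write $k=k_1+k_2$, where $k_1$ (resp.\ $k_2$) is the number of large (resp.\ small) roots in $I(a_h)$. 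By pigeonhole on the finitely many pairs $(k_1,k_2)$ with $0\le k_1\le d-h$ and $0\le k_2\le h$, there is an unbounded sequence of $a_h$ along which $(k_1,k_2)$ is fixed, and along this sequence
\[
\left|\prod_{i\in I(a_h)}\alpha_i\right|\;=\;M(a_h)\cdot|a_h|^{k_1/(d-h)-k_2/h},
\]
with $M(a_h)$ bounded above and bounded away from zero (it reduces to a product of $(d-h)$-th roots of $-1$ and $h$-th roots of $-a_0$ up to vanishing corrections). Since the left side must equal the nonzero constant $|b_0|$ as $|a_h|\to\infty$, the exponent must vanish: $k_1 h=(k-k_1)(d-h)$, so $k_1=k(d-h)/d$.

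Finally, $\gcd(h,d)=1$ gives $\gcd(d-h,d)=1$, so $k_1=k(d-h)/d$ being an integer forces $d\mid k$; but $1\le k<d$ rules this out, yielding the contradiction. The main obstacle is making the decomposition into exactly $d-h$ large and $h$ small roots, with the stated magnitudes, precise and uniform enough to support both the pigeonhole step and the magnitude estimate. I expect this is where the complex analysis alluded to in the introduction enters, via Rouch\'e's theorem applied to $f$ versus its dominant two-term approximations on annuli at the scales $|x|\sim|a_h|^{1/(d-h)}$ and $|x|\sim|a_h|^{-1/h}$.
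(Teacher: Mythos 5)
Your argument follows essentially the same route as the paper: specialize $t_1,\dots,t_{m-1}$ so that $a_h$ becomes a nonconstant polynomial in $t_m$, take $|a_h|\to\infty$, localize the roots of $f$ into $d-h$ roots of magnitude $\sim|a_h|^{1/(d-h)}$ and $h$ roots of magnitude $\sim|a_h|^{-1/h}$ by complex analysis, and conclude from $\gcd(d,h)=1$ that no proper $k$-subset of roots can have bounded nonzero product. The root-localization step you leave as a sketch (invoking Rouch\'e on two annuli) is exactly what the paper carries out in detail via the argument principle, comparing $f'/f$ to $h/x$ on $|x|=\varepsilon$ and to $(hx^{h-1}+dx^{d-1}/a_h)/(x^h+x^d/a_h)$ on circles around the roots of $x^{d-h}+a_h=0$; your pigeonhole step is a harmless but unnecessary detour, since the magnitude estimate holds uniformly once $|a_h|$ is large, ruling out every pair $(k_1,k_2)$ at once.
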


\begin{proof}
By the definition of $P_{d,k,a_0,b_0}$, it suffices to show that there exists a fixed choice of $t_1=s_1, t_2 =s_2, \dots,t_{m-1}=s_{m-1}$, such that for some $t_{m}$ the polynomial $f(x) = x^d + a_{d-1}x^{d-1} + \dots a_1x+ a_0$ does not have $k$ roots whose product is $b_0$. 

Since $F_2(t_0,t_1,\dots,t_{m}) = \sum_{i=0}^{\deg(t_{m})} P_i(t_0,t_1,\dots,t_{m-1})t_{m}^i$ and not all $P_i(t_0,t_1,\dots,t_{m-1})$ for $i\ge 1$ is a polynomial in $t_0$, there exists a choice of $t_1=s_1,\dots,t_{m-1}=s_{m-1}$ such that $F_2(a_0,s_1,\dots,s_{m-1},t_{m})$ is not identically zero as a polynomial in $t_{m}$. Fix such choice of $t_1,\dots,t_{m-1}$. Since we fixed $a_0,t_1,\dots,t_{m-1}$, the coefficients $a_j$ for $j\ne h$ are fixed constants. Since $F_2(a_0,s_1,\dots,s_{m-1},t_{m})$ is not identically zero, it is a univariate polynomial in $t_{m}$, hence $|a_{h}|$ tends to infinity as $t_{m}$ tends to infinity. 

We prove that for fixed $a_j,j\ne h$ and $k,b_0$, for all $|a_{h}|$ sufficiently large, the polynomial $f(x) = x^d + a_{d-1}x^{d-1} + \dots+ a_1x+ a_0$ does not have $k$ roots whose product is $b_0$. 

In particular, we first show that for any $\ee \in (0,1)$ sufficiently small, if $|a_{h}|$ is sufficiently large in $\epsilon$, for each root $\alpha$ of the equation $x^{d-h}+\alpha=0$, $f$ has a root within $\ee|a_h|^{1/(d-h)}$ of $\alpha$, and furthermore, the remaining $h$ roots of $f$ are within $\ee$ of $0$.

By the argument principle, the number of roots of the polynomial $f$ in the region bounded by a circle $C$ in the complex plane is given by 
\begin{align*}
\frac{1}{2\pi i}\int_C \frac{f'(x)}{f(x)}dx &= \frac{1}{2\pi i} \int_C \frac{dx^{d-1}+\sum_{j=0}^{d-2}(j+1)a_{j+1}x^{j}}{x^d+\sum_{j=0}^{d-1}a_{j}x^{j}}dx \\
&= \frac{1}{2\pi i}\int_C \frac{hx^{h-1}+\frac{1}{a_{h}}(dx^{d-1}+\sum_{j\in[0,d-2],j\ne h}(j+1)a_{j+1}x^{j})}{x^{h}+\frac{1}{a_{h}}(x^d+\sum_{j\in[0,d-1],j\ne h}a_{j}x^{j})}dx.
\end{align*}

Let $\delta>0$ be arbitrary. Letting $C = \{x:|x|=\ee\}$, if $|a_{h}|$ is large enough (in $\ee$ and $\delta$), we have
for all $x \in C$, 
$$\left | \frac{hx^{h-1}+\frac{1}{a_{h}}(dx^{d-1}+\sum_{j\in[0,d-2],j\ne h}(j+1)a_{j+1}x^{j})}{x^{h}+\frac{1}{a_{h}}(x^d+\sum_{j\in[0,d-1],j\ne h}a_{j}x^{j})} - \frac{h}{x} \right| < \delta.$$  

Thus, $$\left |\frac{1}{2\pi i}\int_C \frac{f'(x)}{f(x)}dx - \frac{1}{2\pi i}\int_C \frac{h}{x}dx\right |<\delta.$$ Notice that $$\frac{1}{2\pi i}\int_C \frac{h}{x}dx=h,$$ and $\frac{1}{2\pi i}\int_C \frac{f'(x)}{f(x)}dx$ is an integer, we have $$\frac{1}{2\pi i}\int_C \frac{f'(x)}{f(x)}dx = h.$$ 
Hence $f$ has $h$ roots whose magnitude is at most $\ee$. 

Next, let $C=\{x:|x-\alpha|=\ee |\alpha|\}$ where $\alpha$ is a root of $x^{d-h}+a_h=0$. Observe that $|\alpha| = |a_h|^{1/(d-h)}$. Assume that $\ee$ is sufficiently small. For $\delta>0$ arbitrary, if $|a_h|$ is large enough in $\ee$ and $\delta$, we claim that for $x\in C$,
\[
\left| \frac{hx^{h-1}+\frac{1}{a_{h}}dx^{d-1}+\frac{1}{a_{h}}\sum_{j\in[0,d-2],j\ne h}(j+1)a_{j+1}x^{j}}{x^{h}+\frac{1}{a_{h}}x^d+\frac{1}{a_{h}}\sum_{j\in[0,d-1],j\ne h}a_{j}x^{j}} - \frac{hx^{h-1}+dx^{d-1}/a_h}{x^h+x^d/a_h} \right| < \delta.
\]
To see this, note that for $x \in C$, we can write $x= \alpha + \ee y$ with $|y|=|\alpha|=|a_h|^{1/(d-h)}$. For $\ee$ sufficiently small and $a_h$ sufficiently large, 
\[
|x^h+x^d/a_h| = |x|^h |(a_h+(\alpha+\ee y)^{d-h})/a_h| \ge \frac{1}{2}|\alpha|^h \cdot \ee |y| |\alpha|^{d-h-1}/|a_h| = \frac{1}{2}\ee |a_h|^{h/(d-h)}.
\]
Since $\left|\frac{1}{a_h}x^{j}\right| \le 2|a_h|^{(h-1)/(d-h)}$ for $j\notin \{d,h\}$ and $x\in C$, we can easily obtain the desired claim. Since 
\[
\frac{1}{2\pi i}\int_{C} \frac{hx^{h-1}+dx^{d-1}/a_h}{x^h+x^d/a_h}dx = 1,
\]as the function $g(x)=x^{h}+x^d/a_h$ has a unique root $\alpha$ in the region bounded by $C$, we have that 
\[
\frac{1}{2\pi i}\int_{C} \frac{hx^{h-1}+\frac{1}{a_{h}}(dx^{d-1}+\sum_{j\in[0,d-2],j\ne h}(j+1)a_{j+1}x^{j})}{x^{h}+\frac{1}{a_{h}}(x^d+\sum_{j\in[0,d-1],j\ne h}a_{j}x^{j})}dx = 1.
\]
Hence, there is one root of $f$ within $\ee|a_h|^{1/(d-h)}$ of each root $\alpha$ of $x^{d-h}+a_h=0$. 

The roots $\beta$ of $f$ with $|\beta|<\ee$ satisfy $a_h\beta^h = -a_0(1+o_{a_h\to \infty}(1))$, so $|\beta| = (1+o_{a_h\to \infty}(1))|a_0/a_h|^{1/h}$. 
Thus, the polynomial $f$ has $d-h$ roots of magnitude $(1+o_{a_h\to \infty}(1))|a_0/a_h|^{1/h}$, and $h$ roots of magnitude $(1+o_{a_h\to \infty}(1))|a_h|^{1/(d-h)}$. For any $k$ roots of $f$, if there are $k_1$ roots of magnitude $(1+o_{a_h\to \infty}(1))|a_0/a_h|^{1/h}$ and $k_2$ roots of magnitude $(1+o_{a_h\to \infty}(1))|a_h|^{1/(d-h)}$, then the product of the roots has magnitude $(1+o_{a_h\to \infty}(1)) |a_0|^{k_1/h} |a_h|^{k_2/(d-h)-k_1/h}$. For $|a_h|$ sufficiently large, in order for this product to be equal to $b_0 \ne 0$, we need to have $k_2/(d-h)=k_1/h$. However, since $\gcd(d,h)=1$, this can only happen when $k=k_1+k_2=d$, contradiction. 

Thus, there exists a choice of $t_1,\dots,t_{d-1}$ so that $f$ does not have $k$ roots whose product is $b_0$.
\end{proof}

\begin{remark}\label{mistake}
We remark that while the nondegeneracy condition of $P_{d,k,a_0,b_0}(F(a_0,t_1,\dots,t_{m}))$ being not identically $0$ is likely generically true, it is important to verify this condition carefully. Indeed, there are some interesting families of random polynomials where it turns out that this condition fails to hold. In such cases, the technique of this paper breaks down and cannot give nontrivial bounds on the probability that the random polynomial is reducible over $\mathbb{Z}$. Hence, in such cases, considering only the factorization of the constant coefficient over $\mathbb{Z}$ is not sufficient to show that the polynomial is irreducible over $\mathbb{Z}$ with high probability. We next highlight one such case.

Let $f(x) = x^{d} +a_{d-1}x^{d-1} + \dots a_1 x +a_0$ where each integer $a_i \in [-H, H]$. Let $\alpha_1, \alpha_2,\dots, \alpha_d$ be the (complex) roots of $f$.
A famous conjecture of van der Waerden from 1936 states that the Galois group $G_f$ of polynomial $f$ is equal to symmetric group $S_d$ with probability $1-O(1/H)$.

Consider the following polynomial 
\[f_6(x) = \prod_{1\le i_1<i_2<\dots < i_6\le d} (x -\alpha_{i_1}\alpha_{i_2} \dots \alpha_{i_6}).\]
This is polynomial of degree $\binom{d}{6}$ in $x$ with constant coefficient $a_0^{\binom{d-1}{5}}$. We denote each of its roots $\beta_i$.  
For $d\ge 12$, it turns out that Galois Group of $f$ is $6$-transitive (i.e., equals to $S_d$ or $A_d$) if and only if $f_6(x)$ is irreducible over $\Z$.

Thus, to resolve van der Waerden's conjecture, it would be interesting to first show that $f_6$ is irreducible with probability $1-O(1/H)$. This was discovered by Rivin in \cite{igor}, and motivated us to consider the irreducibility of random polynomials with more general coefficient distributions. Unfortunately, if one wants to directly use the method in this paper, then it would fail since $$P_{\binom{d}{6}, 6d, a_0^{\binom{d-1}{5}}, a_0^6}(F(a_0,t_1,\dots,t_{d-1}))$$ is identically $0$. This is because there always exists a subset of tuples of $6$ roots of $f$ of size $d$ whose product is $a_0^6$. Thus the probability that there exists a factorization of $f=gh$ as complex polynomials where the constant coefficients of $g$ and $h$ are integers is $1$. This is a particular example where considering only the factorization of the constant coefficient is not sufficient to get nontrivial bounds on the probability of reducibility. 
\end{remark}

We next give the proofs of Theorem~\ref{thm:simple cases}, Theorem~\ref{thm:vary-one-coeff} and Theorem~\ref{markov}, which readily follow from Theorem~\ref{strong theorem} and Theorem~\ref{check-not-zero}.

\begin{proof}[Proof of Theorem~\ref{markov}]
Theorem~\ref{markov} follows directly from Theorem \ref{strong theorem}, Theorem \ref{check-not-zero}, Lemma \ref{corput} and Lemma \ref{powers lemma}.
\end{proof} 

\begin{proof}[Proof of Theorem~\ref{thm:simple cases}]
\noindent Case (1): This follows directly from Theorem~\ref{markov} with $L=1$ and $C=1$. 

\noindent Case (2): This follows from Theorem~\ref{markov} and the observation that the binomial distribution $B(H,p)$ 
is $((1+o_{pH \to \infty}(1))\sqrt{H/(2\pi p(1-p))},H)$-uniform. 

\noindent Case (3): This follows from Theorem~\ref{markov}, noting that we have 
\begin{align*}
a_{d-1} &= F_2(a_0,t_1,\dots,t_{d-1}),\\
(a_j)_{1\le j\le d-2} &= F_1(a_0,t_1,\dots,t_{d-2}),
\end{align*}
where the polynomials $F_1,F_2$ have degree at most $L^d$. 
\end{proof} 

\begin{proof}[Proof of Theorem~\ref{thm:vary-one-coeff}]
The first claim of Theorem~\ref{thm:vary-one-coeff} follows from Theorem~\ref{markov} and Lemma \ref{corput}.

To show the second claim, we need to verify the non-degeneracy condition in Theorem~\ref{strong theorem} via a modification of Theorem~\ref{check-not-zero}. The proof is similar to the proof of Theorem~\ref{check-not-zero}, we only give a sketch of the key modifications in the following. 

Let $I$ be a subset of $[d-1]$ for which $\gcd(\{d\}\cup I)=1$. Arrange the elements of $I$ in decreasing order $i_1>i_2>\dots>i_s$. By fixing all coefficients outside $I$ (so that $a_0\ne 0$), and considering an asymptotic where $a_{i_1}\to \infty$ and $\omega_{t} := a_{i_{t}}/a_{i_t-1} \to \infty$ at appropriate rates: $\omega_{t}$ tends to infinity sufficiently slow in $\omega_{t-1}$ for $3\le t\le s$, and $\omega_{2}$ tends to infinite sufficiently slow in $|a_{i_1}|$. By a similar argument to the proof of Theorem \ref{check-not-zero}, we can show that for each $2\le t\le k$, there are $i_{t-1}-i_{t}$ roots of $f(x)=x^d+a_{d-1}x^{d-1}+\dots+a_0$ that are of order $\omega_{t}^{1/(i_{t-1}-i_t)}$. Furthermore, there are $d-i_1$ roots of order $|a_{i_1}|^{1/(d-i_1)}$, and $i_s$ roots of order $|a_{i_s}|^{-1/i_s}$. Then, as in Theorem \ref{check-not-zero}, we can verify that for any $k<d$, it cannot be the case that there always exists a subset of $k$ roots whose product is equal to a fixed constant. This implies the second claim in Theorem~\ref{thm:vary-one-coeff}. 
\end{proof} 

\section*{Acknowledgments} 

The second author got introduced to this topic by his Part III essay advisor Professor P\'eter Varj\'u when he was a student at Trinity College, University of Cambridge. The supervision is gratefully acknowledged. The authors would like to thank him for many helpful conversations and suggestions on this topic. We would like to thank Professor Melanie Matchett Wood for helpful discussions and careful readings on earlier drafts.
We would like to thank Professor Kannan Soundararajan for valuable comments on earlier drafts. We thank the anonymous referee and editor for detailed and useful feedback which improved the presentation and results of this paper.

\bibliographystyle{amsplain}
\bibliography{randompoly}{}

\begin{dajauthors}
\begin{authorinfo}[huypham]
  Huy Tuan Pham\\
  Department of Mathematics, \\
  Stanford University, Stanford, CA 94305, USA\\
  huypham\imageat{}stanford\imagedot{}edu \\
\end{authorinfo}
\begin{authorinfo}[maxxu]
  Max Wenqiang Xu\\
  Department of Mathematics, \\
  Stanford University, Stanford, CA 94305, USA\\
  maxxu\imageat{}stanford\imagedot{}edu 
\end{authorinfo}
\end{dajauthors}

\end{document}